\renewcommand{\baselinestretch}{1.2}
\DeclareMathOperator{\Spec}{Spec}
\DeclareMathOperator{\im}{im}
\DeclareMathOperator{\Sch}{Sch}
\DeclareMathOperator{\Coh}{Coh}
\DeclareMathOperator{\Obj}{Obj}
\DeclareMathOperator{\QCoh}{QCoh}
\DeclareMathOperator{\Mor}{Mor}
\DeclareMathOperator{\I}{I}
\DeclareMathOperator{\II}{II}
\DeclareMathOperator{\Sets}{Sets}
\newtheorem{prethm}{{\bf Theorem}}
\newenvironment{thm}{\begin{prethm}{\hspace{-0.5
               em}{\bf.}}}{\end{prethm}}
\newtheorem{prepro}[prethm]{Proposition}
\newenvironment{pro}{\begin{prepro}{\hspace{-0.5
               em}{\bf.}}}{\end{prepro}}
\newtheorem{prelem}[prethm]{Lemma}
\newenvironment{lem}{\begin{prelem}{\hspace{-0.5
               em}{\bf.}}}{\end{prelem}}
\newtheorem{precor}[prethm]{Corollary}
\newtheorem{preremark}{{\bf Remark}}
\newenvironment{rem}{\begin{preremark}\em{\hspace{-0.5
              em}{\bf.}}}{\end{preremark}}
\newtheorem{preexample}{{\bf Example}}
\newenvironment{example}{\begin{preexample}\em{\hspace{-0.5
               em}{\bf.}}}{\end{preexample}}
\newtheorem{preproof}{{\bf Proof.}}
\newenvironment{proof}[1]{\begin{preproof}{\rm
               #1}\hfill{$\Box$}}{\end{preproof}}
\renewcommand{\thefootnote}
\title{\bf{On the Smoothness of Functors}}
\author{A. Bajravani \\
{\footnotesize {Department of Pure Mathematics, Faculty of Mathematical Sciences,}}\\
{\footnotesize {Tarbiat Modares University, Tehran, Iran }}\\
{\footnotesize {P. O. Box 14115-134}}\\
A. Rastegar$^*$\\
{\footnotesize {Faculty of Mathematics, Sharif University, Tehran, Iran}}\\
{\footnotesize {P. O. Box 11155}}}
\begin{document}
\footnotetext{E-mail Addresses:\\
{\tt bajravani@modares.ac.ir}\\
{\tt rastegar@sharif.edu.ir}\\
{\tt (*)Corresponding Author}}
\date{}
\maketitle
\begin{quote}
{\small \hfill{\rule{13.3cm}{.1mm}\hskip2cm}
\textbf{Abstract}\vspace{1mm}

{\renewcommand{\baselinestretch}{1}
\parskip = 0 mm
 In this paper we will try
 to introduce a good smoothness notion
for a functor.
We consider properties and conditions from geometry and algebraic geometry which we expect a smooth
functor should have.\\

\noindent{\small {\it \bf{Keywords}}:  Abelian Category, First Order Deformations,
Multicategory, Tangent Category, Topologizing
Subcategory.
\\

\noindent{\bf{Mathematics Subject Classification:}}
14A20, 14A15, 14A22.}}
}

\vspace{-3mm}\hfill{\rule{13.3cm}{.1mm}\hskip2cm}
\end{quote}

\section{Introduction}
 Nowadays noncommutative algebraic geometry is in the focus
of many basic topics in mathematics and mathematical physics. In these fields, any under consideration space
is an abelian category and a morphism between
noncommutative spaces is a functor between abelian categories. So one may ask to
generalize some aspects of morphisms between commutative spaces to morphisms
between noncommutative ones. One of the important aspects in commutative case is the notion
of
smoothness of a morphism which is stated in some languages, for example: by
lifting property as a universal language, by projectivity of relative cotangent
sheaves as an algebraic language and by inducing a surjective morphism on tangent
spaces as a geometric language.

In this paper, in order to generalize the notion of smooth morphism to
a functor we propose three different approaches.
A
glance description for the first one is as follows: linear approximations of a
space are important and powerful tools. They have geometric meaning and
algebraic structures such as the vector space of the first order deformations of a space.
 So it is legitimate to consider functors which preserve linear
approximations. On the other hand first order deformations are good candidates
for linear approximations in categorical settings. These observations make it reasonable to
consider functors which preserve first order deformations.\\
The second one is motivated from both Schlessinger's approach
and simultaneous deformations. Briefly speaking, a simultaneous deformation
is a deformation which deforms some ingredients
 of an object simultaneously.
Deformations of morphims with nonconstant target, deformations of a
couple $(X,\mathcal{L})$, in which X is a scheme and $\mathcal{L}$
 is a line bundle on X,
 are examples of such deformations. Also
we see that by
 this approach one can get a
 morphism of moduli spaces of some moduli families.
 We get this,
by fixing a universal ring for objects which correspond to each other by a
smooth
functor. Theorem \ref{Th2} connects this notion to the universal ring of an object. In $3.1$ and $3.2$
we describe geometrical setting and usage of this approach respectively.\\
The third notion of smoothness comes from a basic reconstruction theorem of A. Rosenberg,
influenced by ideas of A. Grothendieck. We think that this approach can be a source to
translate other notions from commutative case to noncommutative one.
In remarks \ref{rem2} and \ref{rem3} we notice that these three smoothness notions
are independent of each other.\\

Throughout this paper $\mathbf{Art}$ will denote the category of Artinian local $k$-algebras with quotient field $k$.
By $\mathbf{Sets}$, we denote the category of sets which its morphisms are maps between sets.
Let $F$ and $G$ be functors from $\mathbf{Art}$ to $\mathbf{Sets}$. For two functors
$F,G: \mathbf{Art} \rightarrow \mathbf{Sets}$
the following is the notion of smoothness between morphisms of $F$ and $G$
which has been introduced in \cite{M. Sch.}:\\
\\
A morphism $D:F\rightarrow G$ between covariant functors $F$ and $G$ is said to be a smooth morphism of functors if for any surjective
morphism $\alpha:B\rightarrow A$, with $\alpha \in \Mor(\textbf{Art})$,
the morphism
$$F(B)\rightarrow F(A)\underset{G(A)}{\times}G(B)$$
is a surjective map in $\mathbf{Sets}$.\\
Note that this notion of smoothness
is a notion for morphisms between special functors, i.e. functors from the category $\mathbf{Art}$
to the category $\mathbf{Sets}$, while the concepts
for smoothness which we introduce in this paper are notions for functors, but not for morphisms
between them. \\
\\
A functor $F:\textbf{Art}\rightarrow \mathbf{Sets}$ is said to be a
deformation functor if it satisfies in definition 2.1. of \cite{M. Man.}. For a fixed field $k$ the schemes in this paper are schemes over the
scheme $\Spec(k)$ otherwise it will be stated.

\section{First Smoothness notion and some examples}

{\bf 1.1 Definition:}
Let $M$ and $C$ be two categories. We say that the category $C$ is a
 multicategory over $M$ if there exists a functor
$T:C\rightarrow M$, in which
for any
object $A$ of $M$, $T ^{-1}(A)$ is a full subcategory of $C$. \\
Let
$C$
and $\overline{C}$ be two multicategories over $M$ and $\overline{M}$ respectively. A morphism of
multicategories $C$ and $\overline{C}$ is a couple $(u,\nu)$ of functors,
 with $u:C \rightarrow \overline{C}$ and $\nu:M\rightarrow \overline{M}$ such that the following diagram is commutative:\\
$$\begin{array}{ccccc}
C &\overset{f}\rightarrow&M  \\
u \downarrow& & \downarrow \nu\\
\overline{C}& \rightarrow & \overline{M}\\
\end{array}$$ \\
The category of modules
 over the category of
rings and the category of sheaves of modules
 over the category of schemes are examples of
multicategories.\\

\noindent{\bf 1.2 Definition:} For a $S$-scheme $X$ and $A\in \mathbf{Art}$, we say that $\mathcal{X}$ is a
$S$-deformation of $X$ over $A$ if there is a commutative
diagram:
$$\begin{array}{ccccc}
X & \rightarrow & \mathcal{X}\\
\downarrow & & \downarrow \\
S & \rightarrow & S\underset{k}{\times}A \\
\end{array}$$
in which $X$ is a closed subscheme of $\mathcal{X}$, the scheme $\mathcal{X}$ is flat over
$S\underset{k}{\times}A$ and one has $X \cong S\underset{S\underset{k}{\times}A}{\times}\mathcal{X}$. \\
Note that in the case $S=\Spec(k)$, we would
 have the usual deformation notion and as in
the usual case the set of
 isomorphism classes of first order
 $S$-deformations of $X$ is a $k$-vector
space. The addition of two
 deformations $(\mathcal{X}_{1},\mathcal{O}_{\mathcal{X}_{1}})$ and $(\mathcal{X}_{2},\mathcal{O}_{\mathcal{X}_{2}})$ is denoted by
$(\mathcal{X}_{1}\underset{X}{\bigcup}\mathcal{X}_{2},\mathcal{O}_{\mathcal{X}_{1}}\underset{\mathcal{O}_{X}}{\times}\mathcal{O}_{\mathcal{X}_{2} })$.\\

\noindent{\bf 1.3 Definition:}
{\bf i)} Let $C$ be a category. We say $C$ is a category with enough deformations, if for any object $c$ of $C$,
one can associate a deformation functor. We will denote the associated deformation functor of $c$, by $D_{c}$.
Moreover for any $c\in \Obj(C)$ let
$D_{c}(k[\epsilon])$ be the tangent space of $c$, where $k[\epsilon]$ is the ring of dual numbers.\\
{\bf ii)} Let $C_{1}$ and $C_{2}$ be two
multicategories with enough deformations
 over $\Sch/k$, and
$(F,id)$ be a morphism between
them.
 We say $F$ is a smooth functor if it has the following
properties:\\
{\bf 1 :} For any object $M$ of $C_{1}$, if $M_{1}$ is a deformation of $M$ in
$C_{1}$ then $F(M_{1})$ is a deformation of $F(M)$ on $A$ in $C_{2}$.\\
{\bf 2 :} The map
$$
\begin{array}{ccc}
D_{M}(k[\varepsilon])&\rightarrow&D_{F(M)}(k[\varepsilon])\\
\mathcal{X}\!\!\!\!\!\!\!\!\!\!&\mapsto&\!\!\!\!\!\!\!\!\!\!F(\mathcal{X})
\end{array}
$$
is a morphism of tangent spaces.\\

\noindent The following are examples of categories with enough deformations:\\
1) Category of schemes over a field $k$.\\
2) Category of coherent sheaves on a scheme $X$.\\
3) Category of line bundles over a scheme.\\
4) Category of algebras over a field $k$.\\

\noindent We will need the following lemma to present an example of smooth functors:
\begin{lem}\label{lem1.1}
Let $X$, $X_{1}$, $X_{2}$ and $\mathcal{X}$ be schemes over a fixed scheme $S$.
Assume that the following diagram of morphisms between schemes is a commutative diagram.\\
\vspace{-.5cm}
\begin{center}
\unitlength .7500mm 
\linethickness{0.4pt}
\ifx\plotpoint\undefined\newsavebox{\plotpoint}\fi 
\begin{picture}(20,30)(30,90)
\put(25,115){\makebox(0,0)[cc]{$X$}}
\put(60,115){\makebox(0,0)[cc]{$X_1$}}
\put(25,85){\makebox(0,0)[cc]{$X_2$}}
\put(60,85){\makebox(0,0)[cc]{$\mathcal{X}$}}
\put(52.25,115.5){\vector(1,0){.07}}
\put(31,115.5){\line(1,0){21.25}}
\put(53.25,85.25){\vector(1,0){.07}}
\put(30.25,85.25){\line(1,0){23}}
\put(25,92.25){\vector(0,-1){.07}}
\put(25,110.25){\line(0,-1){18}}
\put(60,92.75){\vector(0,-1){.07}}
\put(60,110.5){\line(0,-1){17.75}}
\put(40,120){\makebox(0,0)[cc]{$i_1$}}
\put(40,80){\makebox(0,0)[cc]{$i_2$}}
\put(66,100){\makebox(0,0)[cc]{$g$}}
\end{picture}
\end{center}
\ \\
If $i_{1}$ is homeomorphic on its image, then so is $i_2$.
\end{lem}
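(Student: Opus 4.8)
\begin{prooff}{}
The plan is to read the square as the gluing (pushout) square $\mathcal{X}=X_{1}\cup_{X}X_{2}$, the same construction used above to add deformations, and to argue on underlying topological spaces. Writing $j\colon X\to X_{2}$ for the left vertical map (so $g\circ i_{1}=i_{2}\circ j$), the space $|\mathcal{X}|$ is the pushout $(|X_{1}|\sqcup|X_{2}|)/\!\sim$ with quotient map $q$, where $i_{1}(x)\sim j(x)$ for $x\in X$. Then $i_{2}=q|_{X_{2}}$ and $g=q|_{X_{1}}$ are continuous by construction, and the task reduces to showing that $i_{2}$ is injective and open onto its image $S:=i_{2}(X_{2})$.

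First I would deduce injectivity of $i_{2}$ from injectivity of $i_{1}$ (which holds since $i_{1}$ is an embedding): the relation $\sim$ only identifies a point of $X_{1}$ with a point of $X_{2}$ through $X$, so $y,y'\in X_{2}$ can become equal in $\mathcal{X}$ only if $j(x)=y$, $j(x')=y'$ and $i_{1}(x)=i_{1}(x')$, and then $i_{1}$ injective forces $x=x'$, hence $y=y'$. I would also record the two bookkeeping identities $q^{-1}(S)=X_{2}\sqcup i_{1}(X)$ and, for $U\subseteq X_{2}$, $q^{-1}(i_{2}(U))=U\sqcup i_{1}(j^{-1}(U))$, both read off from the description of $\sim$ together with injectivity of $i_{1}$.

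The heart of the argument is openness of $i_{2}$ onto $S$, and this is exactly where the embedding hypothesis on $i_{1}$ is used. Given open $U\subseteq X_{2}$, the set $j^{-1}(U)$ is open in $X$, so $i_{1}(j^{-1}(U))$ is open in the subspace $i_{1}(X)$ because $i_{1}\colon X\to i_{1}(X)$ is a homeomorphism; hence there is an open $W_{1}\subseteq X_{1}$ with $W_{1}\cap i_{1}(X)=i_{1}(j^{-1}(U))$. I would then set $W:=W_{1}\sqcup U$ and check that $W$ is saturated for $\sim$: any gluing $i_{1}(x)\sim j(x)$ with one side in $W$ forces $x\in j^{-1}(U)$, so the other side also lies in $W$. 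Therefore $V:=q(W)$ is open in $\mathcal{X}$, and computing preimages gives $q^{-1}(V\cap S)=W\cap q^{-1}(S)=i_{1}(j^{-1}(U))\sqcup U=q^{-1}(i_{2}(U))$, whence $V\cap S=i_{2}(U)$. Thus $i_{2}(U)$ is open in $S$, and the continuous bijection $i_{2}\colon X_{2}\to S$ is open, hence a homeomorphism onto its image.

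I expect the main obstacle to be precisely this saturation step, since a continuous bijection need not be a homeomorphism and there is in general no retraction $\mathcal{X}\to X_{2}$ (that would require extending $j\circ i_{1}^{-1}$ from $i_{1}(X)$ to all of $X_{1}$, a Tietze-type problem with no solution in general). The construction sidesteps this by building, for each open $U$, a saturated open witness $V$; the one property that makes $i_{1}(j^{-1}(U))$ open, and so liftable to $W_{1}$, is that $i_{1}$ is an embedding. This also explains why the hypothesis concerns $i_{1}$ alone and nothing is asked of $j$. Finally I would note that the underlying space of the scheme pushout $X_{1}\cup_{X}X_{2}$ is the topological pushout used here, so the topological conclusion is exactly the statement of the lemma. \hfill$\Box$
\end{prooff}
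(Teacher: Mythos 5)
Your argument is correct, but it differs from the paper's ``proof'' in a basic way: the paper gives no argument at all, its entire proof being the citation ``See Lemma $(2.5)$ of \cite{K. Sch.}''. What you have written is in effect a self-contained reconstruction of that cited lemma, and it adds two things of real value. First, you silently repair the statement: as printed, the lemma assumes only a commutative square, and in that generality it is false (take $X=X_{1}$ a single point, $\mathcal{X}$ a point, and $X_{2}$ any scheme with more than one point; the square commutes, $i_{1}$ is a homeomorphism, but $i_{2}$ is not injective). Reading the square as the gluing square $\mathcal{X}=X_{1}\cup_{X}X_{2}$, as you do, is exactly how the lemma is used later in the paper, in the proof of Lemma 2, where $\mathcal{X}$ is the fibered sum of two deformations along $X$. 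Second, your topological argument is sound: injectivity of $i_{1}$ collapses the chains generating the equivalence relation, which gives injectivity of $i_{2}$; and your set $W=W_{1}\sqcup U$ is indeed saturated (closure under the generating identifications $i_{1}(x)\sim j(x)$ suffices), so the computation $q^{-1}(q(W))\cap q^{-1}\bigl(i_{2}(X_{2})\bigr)=i_{1}(j^{-1}(U))\sqcup U=q^{-1}(i_{2}(U))$ correctly exhibits $i_{2}(U)$ as open in the image.

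The one step you take on faith is your closing remark that the underlying space of the scheme-theoretic fibered sum is the topological pushout $(|X_{1}|\sqcup|X_{2}|)/\!\sim$. That is not automatic for colimits of schemes; it holds here because $X\to X_{1}$ and $X\to X_{2}$ are closed immersions (in the paper's application $X$ is a closed subscheme of both deformations), and this identification is itself part of Schwede's construction of such pushouts. So your proof does not fully escape the literature --- it trades the citation of Schwede's Lemma 2.5 for his more basic existence/description result --- but it makes the topological content explicit and checkable, and it surfaces the pushout hypothesis that the paper's statement omits, which the one-line proof by citation hides entirely.
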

\begin{proof}{
See Lemma $(2.5)$ of \cite{K. Sch.}.
}
\end{proof}
\begin{example}\label{exam0}
Let $Y$ be a flat scheme over
$S$. Then the fibered product by $Y$ over $S$ is smooth.
 More precisely, the functor:
$$\begin{array}{ccc}
F:\Sch/S&\rightarrow&\Sch/Y\\
F(X)\!\!\!\!\!\!\!\!\!\!\!\!\!&=&X\underset{S}{\times}Y
\end{array}$$
is smooth.
\end{example}
Let $X$ be a closed subscheme of $\mathcal{X}$. Then $X\underset{S}{\times}Y$ is a
closed subscheme of $\mathcal{X}\underset{S}{\times}Y$.
To get the flatness of  $\mathcal{X}\underset{S}{\times}Y$  over $S\underset{k}{\times}A$, it suffices to
has flatness of $Y$ over $S$.
It can also be verified easily that the isomorphism:
$$(\mathcal{X}\underset{S}{\times}Y)\underset{S\underset{k}{\times}A}{\times}S\cong X\underset{S}{\times}Y$$
is valid.
Therefore $\mathcal{X}\underset{S}{\times}Y$ is a $S$-deformation of $X\underset{S}{\times}Y$ if $\mathcal{X}$ is such a
deformation of $X$. This verifies
the first condition of item $(\mathbf{ii})$ of definition 1.3.
 To prove the second condition we need the following:
\begin{lem} \label{lem1.2}
Let $Y$, $X_{1}$ and $X_{2}$ be $S$-schemes. Assume that $X$ is a closed subscheme of $X_{1}$ and $X_{2}$.
 Then we have the following isomorphism:
\begin{center}
$(X_{1}\underset{X}{\bigcup} X_{2})\underset{S}{\times}Y\cong
(X_{1}\underset{S}{\times}Y)\underset{X\underset{S}{\times}Y}{\bigcup}(X_{2}\underset{S}{\times}Y)$.
\end{center}
\end{lem}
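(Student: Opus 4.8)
The plan is to reduce the statement to a purely ring-theoretic identity by working over an affine cover, and then to recognise the resulting identity as the assertion that base change commutes with a fibre product of rings, a phenomenon controlled by a single $\mathrm{Tor}$ term. First I would recall that the gluing $X_{1}\underset{X}{\bigcup}X_{2}$ is the pushout of the diagram $X_{1}\leftarrow X\rightarrow X_{2}$ in the category of $S$-schemes, and that on an affine chart $S=\Spec R$, $X_{i}=\Spec B_{i}$, $X=\Spec A$, $Y=\Spec C$ — with the closed immersions $X\hookrightarrow X_{i}$ corresponding to surjections $B_{i}\to A$ — this pushout is computed as $\Spec(B_{1}\times_{A}B_{2})$, where $B_{1}\times_{A}B_{2}$ is the fibre product ring, exactly the structure sheaf appearing in the addition of deformations in Definition 1.2. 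Lemma \ref{lem1.1} guarantees that the structural morphism $X_{2}\to X_{1}\underset{X}{\bigcup}X_{2}$ remains a closed immersion, so the right-hand pushout is again a genuine scheme glued along a common closed subscheme. Since $\Spec$ turns tensor products of rings into fibre products of schemes and fibre products of rings into gluings, the claimed isomorphism of schemes becomes the isomorphism of $R$-algebras
$$(B_{1}\times_{A}B_{2})\otimes_{R}C\;\cong\;(B_{1}\otimes_{R}C)\underset{A\otimes_{R}C}{\times}(B_{2}\otimes_{R}C).$$

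The core step is to establish this algebra isomorphism. I would start from the short exact sequence of $R$-modules
$$0\to B_{1}\times_{A}B_{2}\to B_{1}\oplus B_{2}\xrightarrow{\;(b_{1},b_{2})\,\mapsto\,\bar b_{1}-\bar b_{2}\;}A\to 0,$$
which is exact on the right precisely because $B_{1}\to A$ is surjective, and whose middle exactness is the definition of the fibre product ring. Applying $-\otimes_{R}C$ and using right-exactness of the tensor product, the image of $(B_{1}\times_{A}B_{2})\otimes_{R}C$ is exactly the kernel of the difference map $(B_{1}\otimes_{R}C)\oplus(B_{2}\otimes_{R}C)\to A\otimes_{R}C$, that is, the target fibre product ring; this already yields a canonical surjection $(B_{1}\times_{A}B_{2})\otimes_{R}C\twoheadrightarrow(B_{1}\otimes_{R}C)\underset{A\otimes_{R}C}{\times}(B_{2}\otimes_{R}C)$.

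Injectivity is the delicate point. The long exact $\mathrm{Tor}$ sequence associated to the displayed short exact sequence shows that the kernel of this canonical map is a quotient of $\mathrm{Tor}_{1}^{R}(A,C)$, so the map is an isomorphism as soon as this group vanishes. That vanishing is guaranteed by flatness of $Y$ over $S$ — exactly the hypothesis under which the lemma is applied in Example \ref{exam0} — whence the two sides agree. Finally I would globalise: both the surjection and the identification of its kernel are canonical and hence independent of the choices made, so they are compatible with restriction to smaller affine opens and with the gluing data of $X_{1}$, $X_{2}$, $X$ and $Y$; the local isomorphisms therefore patch to a global isomorphism of $S$-schemes, which one checks is compatible with the projections to $Y$.

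The main obstacle I anticipate is the genuine failure of base change to preserve pushouts in general: the pullback functor $-\underset{S}{\times}Y$ is a right adjoint and so preserves limits rather than colimits, which is why no formal argument delivers the result and why a flatness hypothesis (the vanishing of $\mathrm{Tor}_{1}^{R}(A,C)$) must be invoked to force injectivity. A secondary, more routine point is verifying that the affine-local pushout descriptions glue to an honest scheme, for which Lemma \ref{lem1.1} supplies the needed control on the underlying topological spaces.
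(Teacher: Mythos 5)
Your proposal is correct (given flatness of $Y$ over $S$) and takes a genuinely different route from the paper. The paper's proof constructs a comparison morphism $\theta$ out of the pushout via its universal property, verifies that $\theta$ is bijective and bicontinuous by point-set diagram chases, and only then turns to structure sheaves, where it reduces to exactly the ring identity you isolate,
$$(B_{1}\underset{A}{\times}B_{2})\underset{R}{\otimes}C\;\cong\;(B_{1}\underset{R}{\otimes}C)\underset{A\underset{R}{\otimes}C}{\times}(B_{2}\underset{R}{\otimes}C),$$
and then dismisses that identity with the phrase ``by a simple commutative algebra argument.'' You instead go affine-local from the start, using the description of a pushout along closed immersions as $\Spec$ of a fibre product of rings, and you actually prove the identity: right-exactness of $-\otimes_{R}C$ applied to $0\to B_{1}\times_{A}B_{2}\to B_{1}\oplus B_{2}\to A\to 0$ gives the surjection, and the long exact sequence bounds the kernel by $\mathrm{Tor}_{1}^{R}(A,C)$, which flatness of $C$ over $R$ kills. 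This is precisely the step the paper never confronts, and it is where the mathematical content lies.

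What your argument buys, beyond rigor, is the discovery that the flatness hypothesis is genuinely necessary: the lemma as stated in the paper (with no hypothesis on $Y$) is false. Take $S=\Spec k[x]$, $Y=\Spec k[x]/(x)$, $X_{1}=X_{2}=\Spec k[x]/(x^{2})$ and $X=\Spec k[x]/(x)$. Then $B_{1}\times_{A}B_{2}\cong k[u,v]/(u,v)^{2}$ with $x$ acting as $u+v$, so the left-hand side of the lemma is $\Spec k[\epsilon]/(\epsilon^{2})$, while $B_{i}\otimes_{R}C\cong k$ makes the right-hand side $\Spec k$: the two sides agree as topological spaces (which is all the paper's point-set chases can detect) but not as schemes, and the discrepancy is exactly the nonvanishing $\mathrm{Tor}_{1}^{k[x]}(k,k)$ appearing in your proof. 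Since the lemma is only ever applied in Example \ref{exam0}, where $Y$ is assumed flat over $S$, your proof of the corrected statement is the right one; the paper's topological analysis is not wrong, but it cannot substitute for the algebraic step where flatness must enter. One small point to make explicit in your write-up: the globalisation tacitly uses that the pushout along closed immersions is covered by affine opens whose preimages in $X_{1}$, $X_{2}$ and $X$ are affine, which is part of the Schwede--Ferrand construction you invoke (and is what Lemma \ref{lem1.1} is extracted from in the paper).
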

\begin{proof}{
For simplicity we set:
$$X_{1}\underset{X}{\cup}X_{2}=\mathcal{X} \qquad , \qquad
(X_{1}\underset{S}{\times}Y)\underset{X\underset{S}{\times}Y}{\bigcup}(X_{2}\underset{S}{\times}Y)=\mathcal{Z}$$
By universal property of $\mathcal{Z}$ we have a morphism
$\theta:\mathcal{Z}\rightarrow\mathcal{X}\underset{S}{\times}Y$.
 We prove that $\theta$ is an isomorphism.
Let $i_{1}:X_{1}\rightarrow \mathcal{X}$, $i_{2}: X_{2}\rightarrow \mathcal{X}$,
$j_{1}:X_{1}\underset{S}{\times}Y\rightarrow \mathcal{Z}$ and
$j_{2}:X_{2}\underset{S}{\times}Y\rightarrow \mathcal{Z}$
be the
inclusion morphisms.
Set theoretically we have:
$$\begin{array}{cccc}
j_{1}(X_{1}\underset{S}{\times}Y)\bigcup j_{2}(X_{2}\underset{S}{\times}Y)&=&\mathcal{Z}& \qquad(\mathbf{\I})\\
i_{1}(X_{1})\bigcup i_{2}(X_{2})&=&\mathcal{X} & \qquad(\II)
\end{array}$$
Now consider the following commutative diagrams:
\vspace{-.2cm}
\begin{center}
\unitlength 0.50mm 
\linethickness{0.4pt}
\ifx\plotpoint\undefined\newsavebox{\plotpoint}\fi 
\begin{picture}(70,100)(0,40)
\put(10,100){\makebox(0,0)[cc]{$X$}}
\put(40,130){\makebox(0,0)[cc]{$X_1$}}
\put(70,100){\makebox(0,0)[cc]{$\mathcal{X}$}}
\put(40,70){\makebox(0,0)[cc]{$X_2$}}
\put(34.75,127.25){\vector(1,1){.14}}
\multiput(13.75,103.5)(.067307692,.076121795){312}{\line(0,1){.076121795}}
\put(67.25,105.75){\vector(1,-1){.14}}
\multiput(45.5,127.5)(.067337461,-.067337461){323}{\line(0,-1){.067337461}}
\put(35.75,74.75){\vector(1,-1){.14}}
\multiput(13.5,97)(.067424242,-.067424242){330}{\line(0,-1){.067424242}}
\put(66.25,97.5){\vector(1,1){.14}}
\multiput(44.5,74)(.067337461,.072755418){323}{\line(0,1){.072755418}}
\put(20,120){\makebox(0,0)[cc]{$f$}}
\put(58,120){\makebox(0,0)[cc]{$i_1$}}
\put(58,80){\makebox(0,0)[cc]{$i_2$}}
\put(20,80){\makebox(0,0)[cc]{$g$}}
\end{picture}\\
\vspace{-.4cm}
\unitlength .500mm 
\linethickness{0.4pt}
\ifx\plotpoint\undefined\newsavebox{\plotpoint}\fi 
\begin{picture}(138.5,100)(0,20)
\put(23,77){\makebox(0,0)[cc]{$X\underset{S}{\times}Y$}}
\put(57,113){\makebox(0,0)[cc]{$X_1\underset{S}{\times} Y$}}
\put(57,42){\makebox(0,0)[cc]{$X_2\underset{S}{\times}Y$}}
\put(130,115){\makebox(0,0)[cc]{$\mathcal{Z}$}}
\put(130,42){\makebox(0,0)[cc]{$\mathcal{X}\underset{S}{\times}Y$}}
\put(51.5,109){\vector(1,1){.07}}
\multiput(27.25,84.75)(.0337078652,.0337078652){700}{\line(0,1){.0337078652}}
\put(51.5,49.3){\vector(1,-1){.07}}
\multiput(26.5,74.5)(.0337273992,-.0337273992){700}{\line(0,-1){.0337273992}}
\put(120,115.5){\vector(1,0){.07}}
\put(75,115.5){\line(1,0){45}}
\put(118,45.25){\vector(1,0){.07}}
\put(75,45.25){\line(1,0){40}}
\put(129.5,51.25){\vector(0,-1){.07}}
\multiput(129.75,110.5)(-.03125,-7.40625){8}{\line(0,-1){7.40625}}
\put(121.5,52.5){\vector(1,-1){.07}}
\multiput(62.5,111)(.0340253749,-.0337370242){1734}{\line(1,0){.0340253749}}
\put(122.25,110){\vector(1,1){.07}}
\multiput(62.5,50)(.0337380011,.0338791643){1771}{\line(0,1){.0338791643}}
\put(35,100){\makebox(0,0)[cc]{$g_1$}}
\put(90,121){\makebox(0,0)[cc]{$j_1$}}
\put(35,57.75){\makebox(0,0)[cc]{$g_2$}}
\put(90,36){\makebox(0,0)[cc]{$h$}}
\put(138,80){\makebox(0,0)[cc]{$\theta$}}
\put(108,70){\makebox(0,0)[cc]{$e$}}
\put(75,70){\makebox(0,0)[cc]{$j_2$}}
\end{picture}
\end{center}
Let $z\in \mathcal{X}\underset{S}{\times}Y$, $\alpha=P_{\mathcal{X}}(z)\in \mathcal{X}$ and
$\beta=P_{Y}(z)\in Y$ in which $P_{\mathcal{X}}$ and $P_{Y}$ are the first and second projections from $\mathcal{X} \underset{S}{\times}Y$
to $\mathcal{X}$ and $Y$ respectively.
 Then by relation $(\II)$ one has $ \alpha\in i_{1}(X_{1})$ or $ \alpha\in i_{2}(X_{2})$. If
$\alpha=i_{1}(\alpha_{1})\in i_{1}(X_{1})$, then
$\alpha_{1}$ and $\beta$
go to the same element in S by $\eta_{X_{1}}$ and $\eta_{Y}$ in which $\eta_{X_{1}}:X_{1}\rightarrow S$ and
$\eta_{Y}:Y\rightarrow S$ are the maps which make $X_{1}$ and $Y$ schemes over $S$.
Therefore there exists an element
$\gamma$ in $X_{1}\underset{S}{\times}Y$
such that $\overline{P}_{X_{1}}(\gamma)=\alpha_{1}$ and
$\overline{P}_{Y}(\gamma)=\beta$
in which $\overline{P}_{X_{1}}$ and $\overline{P}_{Y}$ are
the first and second projections from  $X\underset{S}{\times}Y$ to $X_{1}$ and $Y$ respectively.
By universal property of fibered products $\gamma$ belongs to $\mathcal{X}\underset{S}{\times}Y$
 and $\theta(\gamma)=z$. The proof for the case $\alpha \in i_{2}(X)$ is similar.
This implies that $\theta$ is surjective.\\
For injectivity of $\theta$ assume that
$\theta(z_{1})=\theta(z_{2})$.
 The relation $(\I)$ implies that $z_{1}$ and $z_{2}$ belong to $\im(j_{1})\bigcup \im(j_{2})$. Set
$z_{1}=j_{1}(c_{1})$ and $z_{2}=j_{2}(c_{2})$.
There are two cases: if $z_{1}, z_{2} \in \im(j_{1})\cap \im(j_{2})$,
then the lemma \ref{lem1.1} implies
 $e(c_{1})\neq e(c_{2})$ when $c_{1}\neq c_{2}$.
Now by commutativity of the subdiagram:
\begin{center}
\unitlength .7500mm 
\linethickness{0.4pt}
\ifx\plotpoint\undefined\newsavebox{\plotpoint}\fi 
\begin{picture}(30,30)(30,90)
\put(20,115){\makebox(0,0)[cc]{$X_1\underset{S}{\times}Y$}}
\put(70,114){\makebox(0,0)[cc]{$\mathcal{X} \underset{S}{\times}Y$}}
\put(70,82){\makebox(0,0)[cc]{$\mathcal{Z}$}}
\put(59,115.5){\vector(1,0){.07}}
\put(32.75,115.5){\line(1,0){26.25}}
\put(70.25,108){\vector(0,1){.07}}
\put(70.25,86.5){\line(0,1){21.5}}
\put(65.75,86.25){\vector(3,-2){.07}}
\multiput(28,110.25)(.0530196629,-.0337078652){712}{\line(1,0){.0530196629}}
\put(39.25,95.25){\makebox(0,0)[cc]{$j_1$}}
\put(75.25,98.25){\makebox(0,0)[cc]{$\theta$}}
\end{picture}
\end{center}
we have $\theta(z_{1})\neq \theta(z_{2})$ when $z_{1}\neq z_{2}$.\\
Otherwise assume that $z_{1}\in \im(j_{1})$ and $z_{2}\in \im(j_{2})- \im(j_{1})$. In this case
one can see easily that
$i_{1}\overline{P}_{X_{1}}(c_{1})=i_{2}q_{2}(c_{2})$ in which $q_{2}$ is
the first projection from
$X_{2}\underset{S}{\times}Y$ to $X_{2}$.
Since $\mathcal{X}$ is the fibered sum of $X_{1}$ and $X_{2}$,
there exists an
element $x\in X$ such that $i_{1}f(x)=i_{2}g(x)$, $f(x)=\overline{P}_{X_{1}}(c_{1})$
and $g(x)=q_{2}(c_{2})$.\\
Set $y=p_{2}e(c_{1})$ in which $p_{2}$ is the second projection from
$\mathcal{X}\underset{S}{\times}Y$ to $Y$.
By a diagram chasing we see that
$x$ and $y$ go to
the same element in S. This implies that there exists an element $\epsilon$ in $X\underset{S}{\times}Y$ which is mapped to $x$
 and $y$ by first and second projections, respectively. Also it is easy to see that
the equalities $g_{1}(x,y)=c_{1}$ and $g_{2}(x,y)=c_{2}$ are valid. Since $\mathcal{Z}$ is the fibered
sum of $X_{1}\underset{S}{\times}Y$ and $X_{2}\underset{S}{\times}Y$ on $X\underset{S}{\times}Y$, we have
$z_{1}=z_{2}$ which means that $\theta$ is injective. This together with the surjectivity of $\theta$ implies that $\theta$
is bijective.
Continuity of $\theta$ and its inverse, follow by a diagram chasing.\\
Finally we should prove that $\mathcal{O}_{\mathcal{X}\underset{S}{\times}Y}\cong \mathcal{O}_{Z}$. Since the claim is
local, it is sufficient to prove it for affine schemes. Let $\mathcal{X}$ be an affine scheme, so $X_{1}$,
$X_{2}$ and $X$ are affine schemes, since they are closed
subschemes of $\mathcal{X}$ each one defined by a nilpotent
 sheaf of ideals.
Set $\mathcal{X}=\Spec(A)$, $X_{1}=\Spec(A_{1})$, $X_{2}=\Spec(A_{2})$, $X=\Spec(A_{0})$,
$Y=\Spec(B)$ and $S=\Spec(C)$. The isomorphism $\mathcal{O}_{\mathcal{X}\underset{S}{\times}Y}\cong \mathcal{O}_{Z}$ reduces to the following
isomorphism:
$$(A_{1}\underset{A_{0}}{\times}A_{2})\underset{C}{\otimes}B\cong
(A_{1}\underset{C}{\otimes}B)\underset{A_{0}\underset{C}{\otimes}B}{\times}(A_{2}\underset{C}{\otimes}B).$$
Define a morphism as follows:
$$\begin{array}{ccc}
d:(A_{1}\underset{A_{0}}{\times}A_{2})\underset{C}{\otimes}B&\rightarrow&
(A_{1}\underset{C}{\otimes}B)\underset{A_{0}\underset{C}{\otimes}B}{\times}(A_{2}\underset{C}{\otimes}B)\\
d((a_{1},a_{2})\otimes b)\!\!\!\!\!\!\!\!\!\!\!\!\!\!\!\!\!\!\!\!&=&\!\!\!\!\!\!\!\!\!\!\!\!\!\!\!\!\!\!\!\!\!\!\!\!\!\!\!\!\!\!
\!\!(a_{1}\otimes b,a_{2}\otimes b).
\end{array}$$
By a simple commutative algebra argument it can be shown that this is in fact an
isomorphism. This completes the proof of lemma.}
\end{proof}
This lemma shows that the fibered product functor, induces an
additive homomorphism on tangent spaces. To check linearity
with respect to scalar multiplication, take an element $a$ in the field $k$.
 Multiplication by $a$ is a ring homomorphism on $D$. This homomorphism induces a
morphism from $S\underset{k}{\times}D$ to $S\underset{k}{\times}D$ and scalar multiplication on
$t_{D_{X}}$, comes from composition of this map with
$\pi$. In other words this gives a map from $\mathcal{X}\underset{S}{\times}Y$
into $\mathcal{X}\underset{S}{\times}Y$. These together give the
linearity of homomorphism induced from $F$ with respect to scalar
multiplication.\\
This observation together with the lemma \ref{lem1.2}, give the smoothness of the fibered product functor.
\begin{lem}\label{lem1.3}
Let $X$ and $Y$ be arbitrary schemes and assume that
there exist morphisms
$h$ and $g$ from $\eta$ to $\eta_{1}$ and $\eta_{2}$, where
$\eta$, $\eta_{1}$, $\eta_{2}$ are sheaves of
$\mathcal{O}_{X}$-modules on the scheme $X$.
Then for any morphism $f:X\rightarrow Y$ we have the following isomorphisms:
$$
\begin{array}{ccc}
f_{*}(\eta_{1}\underset{\eta}{\times}\eta_{2})\!\!\!\!\!&\cong &\!\!\!\!\!f_{*}(\eta_{1}) \underset{f_{*}(\eta)}{\times}
f_{*}(\eta_{2}) \\
f^{*}(\rho_{1}\underset{\rho}{\times}\rho_{2})\!\!\!\!\!&\cong&\!\!\!\!\!
f^{*}(\rho_{1})\underset{f^*(\rho)}{\times}
f^{*}(\rho_{2}).
\end{array}
$$
\end{lem}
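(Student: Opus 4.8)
The plan is to reduce both isomorphisms to the single observation that a fibered product of sheaves of modules is a kernel, and then to examine how $f_{*}$ and $f^{*}$ interact with that kernel. Denoting by $a\colon \eta_{1}\to\eta$ and $b\colon \eta_{2}\to\eta$ the two structural morphisms of the fibered product, one has the functorial presentation
\[
\eta_{1}\underset{\eta}{\times}\eta_{2}=\ker\!\bigl(\eta_{1}\oplus\eta_{2}\xrightarrow{\,a-b\,}\eta\bigr),
\]
and likewise for $\rho_{1}\times_{\rho}\rho_{2}$ on $Y$. Since $f_{*}$ and $f^{*}$ are additive, each commutes with the finite direct sum, so in both cases the only point at issue is whether the functor sends this kernel to the corresponding kernel.

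For the first isomorphism I would invoke that $f_{*}$ is right adjoint to $f^{*}$, hence left exact, hence kernel preserving; combined with the reduction above this yields $f_{*}(\eta_{1}\times_{\eta}\eta_{2})\cong f_{*}\eta_{1}\times_{f_{*}\eta}f_{*}\eta_{2}$ at once. Equivalently and even more concretely, the fibered product of sheaves is computed section by section, $(\eta_{1}\times_{\eta}\eta_{2})(U)=\eta_{1}(U)\times_{\eta(U)}\eta_{2}(U)$, while $(f_{*}\mathcal{F})(V)=\mathcal{F}(f^{-1}V)$; substituting $U=f^{-1}V$ gives the identification, naturally in every open $V\subseteq Y$. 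This half is routine.

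The second isomorphism is the substantive one, because $f^{*}$ is only right exact and so need not preserve kernels. There is always a canonical comparison morphism $f^{*}(\rho_{1}\times_{\rho}\rho_{2})\to f^{*}\rho_{1}\times_{f^{*}\rho}f^{*}\rho_{2}$ coming from the universal property, and I would test it on stalks. Using $(f^{*}\mathcal{G})_{x}\cong\mathcal{G}_{f(x)}\otimes_{\mathcal{O}_{Y,f(x)}}\mathcal{O}_{X,x}$ together with the exactness of the stalk functor (which commutes with fibered products), the claim reduces to the purely ring-theoretic question of whether $-\otimes_{R}S$ commutes with a fibered product of $R$-modules,
\[
(M_{1}\underset{N}{\times}M_{2})\otimes_{R}S\;\cong\;(M_{1}\otimes_{R}S)\underset{N\otimes_{R}S}{\times}(M_{2}\otimes_{R}S),
\]
with $R=\mathcal{O}_{Y,f(x)}$ and $S=\mathcal{O}_{X,x}$; this is the direct analogue of the map $d$ appearing at the end of Lemma \ref{lem1.2}.

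This last reduction is where I expect the real obstacle. Applying the right-exact functor $-\otimes_{R}S$ to the defining sequence $M_{1}\times_{N}M_{2}\to M_{1}\oplus M_{2}\to N$ shows that when $M_{1}\oplus M_{2}\to N$ is surjective (as it is in the deformation-theoretic situations of interest, where the relevant maps to $\rho$ are surjective) the comparison map is automatically onto the fibered product, so the only remaining point is its injectivity; and that injectivity is governed by a $\mathrm{Tor}_{1}^{R}(N,S)$ term. The heart of the matter is therefore exactly the failure of left-exactness of $f^{*}$: the second isomorphism holds once this $\mathrm{Tor}$ obstruction vanishes, for instance when $f$ is flat (so that $S$ is flat over $R$), or when the defining sequence is locally split. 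I would accordingly prove the flat (or split) case by reducing to affine opens and exhibiting the explicit inverse to $d$ by a commutative-algebra computation, exactly as in Lemma \ref{lem1.2}, and flag flatness of $f$ as the hypothesis under which the pullback statement is valid.
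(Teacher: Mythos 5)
Your proof is correct and takes a genuinely different route from the paper's --- and yours is the more careful of the two. For the direct image the difference is cosmetic: the paper's one-line appeal to ``the definition of direct image'' amounts to your section-wise identification $(f_{*}\mathcal{F})(V)=\mathcal{F}(f^{-1}V)$ together with the fact that fibered products of sheaves are computed section-wise; your adjunction/left-exactness remark is an equivalent packaging. The real divergence is in the pullback. The paper reduces the second isomorphism to the fact that filtered direct limits commute with fibered products of modules. That fact is true, but it only accounts for the colimit ingredient of $f^{*}$, i.e.\ the sheaf-theoretic inverse image $f^{-1}$ (equivalently, the formation of stalks); it says nothing about the subsequent extension of scalars $f^{-1}\rho\otimes_{f^{-1}\mathcal{O}_{Y}}\mathcal{O}_{X}$, which is precisely the right-exact-but-not-left-exact step your $\mathrm{Tor}$ analysis isolates. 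So the paper's proof of the second isomorphism has a gap at exactly the point you flagged as ``the real obstacle.''

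Moreover, that gap cannot be closed for arbitrary $f$: the second isomorphism is false as stated. Take $Y=\Spec(k[t])$, let $f:X=\Spec(k)\rightarrow Y$ be the inclusion of the point $t=0$, let $\rho_{1}=\rho_{2}=\rho=\mathcal{O}_{Y}$, and let the two structure maps be multiplication by $t$ and the zero map. Then $\rho_{1}\times_{\rho}\rho_{2}\cong\mathcal{O}_{Y}$, so $f^{*}(\rho_{1}\times_{\rho}\rho_{2})\cong k$, while both pulled-back maps vanish on $X$, so $f^{*}\rho_{1}\times_{f^{*}\rho}f^{*}\rho_{2}\cong k\oplus k$. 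Your flatness (or local splitting) hypothesis is therefore not an optional strengthening but exactly what the statement needs, and it costs nothing in context: Example \ref{exam1}, the only place the lemma is invoked, assumes $f$ flat from the outset. In short, your proposal proves the corrected statement, with the first half matching the paper in substance; the paper's own argument for the pullback covers only the $f^{-1}$ part of $f^{*}$, and its unrestricted claim does not hold.
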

\begin{proof}{
 For the first isomorphism, it is enough to consider
the definition of direct image of sheaves.\\
To prove the second one, assume that $(M_{i})_{i\in I}, (N_{i})_{i\in I}$ and $(P_{i})_{i \in I}$
are direct systems of modules over a directed set $I$. We have to prove that
$$\lim_{i\in I}(M_{i}\underset{P_{i}}{\times}N_{i})\cong (\lim_{i\in
I}(M_{i}))\underset{(\lim_{i\in I}(P_{i}))}{\times}(\lim_{i\in I}(N_{i})).$$
The above isomorphism can be proved by elementary calculations and using elementary properties of
direct limits.}
\end{proof}
\begin{example}\label{exam1}
Let $f:X\rightarrow Y$ be a flat morphism of schemes.
Then $f_{*}$ and $f^{*}$ are smooth functors.
\end{example}
In fact
let $\eta$ be a coherent sheaf on $X$ and $\eta_{1}\in \Coh(X\underset{k}{\times}D)$ be a
deformation of $\eta$. By these assumptions we would have:
$$(f_{*}(\eta))\underset{D}{\otimes}k=f_{*}(\eta_{1}\underset{D}{\otimes}k)=f_{*}(\eta).$$
Moreover
$f_{*}(\eta_{1})$ is flat on $D$, because $\eta$ is flat on $D$. This implies that $f_{*}$
satisfies in the first condition of smoothness. The second one is the first
isomorphism of lemma \ref{lem1.3}. Therefore $f_{*}$ is smooth.
Smoothness of $f^{*}$ is similar to that of $f_{*}$.

Assuming this notion of smoothness we can generalize another aspect of geometry
to categories.

\noindent{\bf 1.9 Definition:} Let $C$ be a category with enough deformations.
We define the tangent category of $C$, denoted by $TC$, as follows:
$$
\begin{array}{ccc}
\Obj(TC)\!\!\!\!\!\!\!\!\!\!&:=&\underset{c\in\Obj(C)}{\bigcup} T_{c}C\\
\Mor_{TC}(\upsilon,\omega)&:=&\!\!\!\!\!\!\!\!\!\!\Mor(V,W)
\end{array}
$$
which by $T_{c}C$, we
mean the tangent space
of $D_{c}$.
Moreover $\upsilon$ and $\omega$ are
first order deformations of $V$ and $W$.
\begin{rem}\label{rem1}
(i) It is easy to see that a smooth functor induces a covariant functor on the tangent categories.\\
(ii) Let $C$ be an abelian category. Then its tangent category is also abelian.
\end{rem}
The following is a well known suggestion of A. Grothendieck:
Instead of working with a space,
 it is enough to
work on the category of quasi coherent sheaves on this space.
This suggestion
was formalized and proved by P. Gabriel for noetherian schemes and in its general form by A. Rosenberg.
To do this,
Rosenberg associates a locally ringed space to an abelian category $A$. In a
special case he gets the following:
\begin{thm}\label{Th1.1}
 Let $(X,\mathcal{O}_{X})$ be a locally ringed space
 and let $A=\QCoh(X)$. Then
 $$(\Spec(A),\mathcal{O}_{ \Spec(A)})=(X,\mathcal{O}_{X})$$
where
$\Spec(A)$ is the ringed space which is constructed from an abelian category by A. Rosenberg.
\end{thm}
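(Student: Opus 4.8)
The statement is the Gabriel--Rosenberg reconstruction theorem specialized to the category of quasi-coherent sheaves, so the plan is to reproduce the mechanism of Rosenberg's spectrum rather than to reprove it from scratch; the essential input is the construction of $\Spec(A)$ out of the lattice of topologizing subcategories of $A$. Recall that a topologizing subcategory is a full subcategory closed under subquotients and finite direct sums, and that one orders objects by declaring $M \succeq N$ when $N$ is a subquotient of a finite direct sum of copies of $M$. The points of $\Spec(A)$ are the equivalence classes of those objects that generate irreducible (i.e. $\succ$-minimal) topologizing subcategories, the topology is read off from the inclusion lattice of such subcategories, and the structure sheaf is assembled from the localizations of $A$. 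I would organize the argument in three stages: a bijection of underlying sets, a homeomorphism of topological spaces, and an identification of the sheaves of rings.

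First I would identify the points. For each $x \in X$ the push-forward $i_{x,*}\kappa(x)$ of the residue field is a quasi-coherent sheaf whose associated topologizing subcategory is irreducible, which produces a point of $\Spec(\QCoh(X))$; conversely, using that $X$ is locally ringed, one checks that every irreducible topologizing subcategory arises from a unique such object, yielding a bijection between $|X|$ and the underlying set of $\Spec(\QCoh(X))$. Second, I would match the topologies: the key is Gabriel's classification of the relevant (localizing) subcategories of $\QCoh(X)$ by specialization-closed, equivalently open, subsets of $X$, which Rosenberg extends beyond the noetherian case. Transporting this lattice isomorphism through the bijection of the first stage shows that the comparison map $|X| \to \Spec(\QCoh(X))$ is a homeomorphism.

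Third, and this is where I expect the real work to lie, I would reconstruct the sheaf of rings. For an open $U = X \setminus Z$ one recovers $\QCoh(U)$ as the Serre quotient $\QCoh(X)/\QCoh_{Z}(X)$, where $\QCoh_{Z}(X)$ is the localizing subcategory of sheaves supported on $Z$; then $\mathcal{O}_{X}(U)$ is recovered categorically as the center $\mathrm{End}(\mathrm{Id})$ of this quotient category (equivalently, as $\Gamma(U,\mathcal{O}_{U})$), and the quotient functors supply the presheaf restriction maps. Sheafifying the resulting presheaf on the basis of opens gives $\mathcal{O}_{\Spec(A)}$, and the task is to verify that it coincides with $\mathcal{O}_{X}$ compatibly with restriction. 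The delicate point is precisely this last identification: it must be carried out without noetherian hypotheses and must respect the full locally ringed structure, not merely the underlying topology, so one cannot simply appeal to the classification of subcategories. Granting Rosenberg's construction and his verification of this structure-sheaf step, the three identifications combine to the asserted equality $(\Spec(A),\mathcal{O}_{\Spec(A)}) = (X,\mathcal{O}_{X})$, and the theorem follows.
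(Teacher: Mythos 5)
The paper does not actually prove this theorem: its entire proof is the citation ``See Theorem (A.2) of \cite{A. L. R}'', and your proposal, in substance, ends the same way --- the conclusion is reached only by ``granting Rosenberg's construction and his verification of this structure-sheaf step''. So at the level of strategy you and the paper take the same route (reduce to the Gabriel--Rosenberg reconstruction theorem), and judged only on that level your proposal is acceptable.

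However, the sketch of Rosenberg's mechanism that you interpose is wrong at its first and most concrete step, and the error would derail stage one if anyone tried to execute it. In Rosenberg's $\Spec(A)$, the point attached to $x\in X$ is the equivalence class of the structure sheaf of the reduced closure of $x$ (in the affine case $X=\Spec(R)$ it is the module $R/\mathfrak{p}_{x}$), \emph{not} the pushforward of the residue field $i_{x,*}\kappa(x)$. Recall the preorder: $M\succeq N$ iff $N$ is a subquotient of a finite direct sum of copies of $M$, and a nonzero $M$ defines a point iff $N\succeq M$ for every nonzero subobject $N\subseteq M$. Take $X=\Spec(\mathbb{Z})$ and $x$ the generic point: $\kappa(x)=\mathbb{Q}$ contains $\mathbb{Z}$ as a subobject, but $\mathbb{Q}$ is not a subquotient of any $\mathbb{Z}^{n}$ (all such subquotients are finitely generated), so $\mathbb{Z}\not\succeq\mathbb{Q}$ and the class of $\mathbb{Q}$ is \emph{not} a point of $\Spec(\mathbb{Z}\te{-Mod})$, whereas the class of $\mathbb{Z}=\mathbb{Z}/(0)$ is. Hence your proposed bijection fails at every non-closed point of $X$. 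Two further slips: specialization-closed subsets are not ``equivalently open'' (they are unions of closed sets, in general neither open nor closed), and no argument of this kind can work at the stated generality of arbitrary locally ringed spaces --- for the one-point space with $\mathcal{O}=\mathbb{Z}_{(p)}$ one has $\QCoh\simeq\mathbb{Z}_{(p)}\te{-Mod}$, whose Rosenberg spectrum has two points --- but that defect belongs to the paper's statement as well (Rosenberg proves reconstruction for quasi-compact, quasi-separated schemes), so your sketch inherits it rather than creates it.
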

\begin{proof}{
See Theorem $(A.2)$ of \cite{A. L. R}.
}
\end{proof}
The definition of tangent category and theorem 4 motivates the following questions which the authors
could not find any positive or negative answer to them until yet.\\

\noindent{\bf Question 1:}
For a fixed scheme $X$ consider $T\QCoh(X)$ and $TX$, the tangent category of
category of quasi coherent sheaves on $X$ and the tangent bundle of $X$ respectively. Can $TX$ be recovered from $T\QCoh(X)$
by Rosenberg construction?\\
\textbf{Question 2:} Let $\mathcal{M}$
 be a moduli family with moduli space $M$.
Consider $\mathcal{M}$ as a category and consider its tangent category $T\mathcal{M}$. Is there a
 reconstruction from $T\mathcal{M}$ to $TM$?

\section{Second Smoothness Notion}
\textbf{Definition 3.1 :}
 Let $F:\Sch/k\rightarrow \Sch/k$ be a functor with the following property:\\
 For any scheme X and an algebra $A\in \Obj(\textbf{Art})$, $F(\mathcal{X})$ is a deformation of $F(X)$ over $A$
 if $\mathcal{X}$ is a deformation of $X$ over $A$.\\
We say $F$ is
 smooth at $X$, if the morphism of
 functors
$$\Theta_{X}:D_{X}\rightarrow D_{F(X)}$$
is a smooth
morphism of functors in the sense of Schlessinger (See \cite{M. Sch.}). $F$ is said to be smooth if for
any object $X$ of $\Sch/k$, the morphism of functors   $\Theta_{X}$ is
smooth.\\

\noindent The following lemma describes more properties of smooth functors.
\begin{lem}\label{lem2.1}
$(a)$ Assume that $C_{1}$, $C_{2}$ and $C_{3}$ are multicategories over the category $\Sch/k$.
Let $F_{1}:C_{1}\rightarrow C_{2}$ and $F_{2}:C_{2}\rightarrow C_{3}$ be
smooth functors with the first notion.
Then so is their composition.\\
$(b)$ Let $F_{1}:\Sch/k \rightarrow \Sch/k$ and $F_{2}:\Sch/k \rightarrow \Sch/k$ be
smooth functors with second notion. Then so is their composition.\\
$(c)$ Let $F:\Sch/k \rightarrow \Sch/k$ and $G:\Sch/k \rightarrow \Sch/k$ be functors to which $F$ and $GoF$ are smooth with second notion.
Then $G$ is a smooth functor.\\
$(d)$ Let $F,G,H: \Sch/k\rightarrow \Sch/k$ be smooth functors in the sense of second notion with
morphisms of functors $F\rightarrow G$ and $H\rightarrow G$
between them. Then the functor $F\underset{G}{\times}H$ is smooth functor with the second one.
\end{lem}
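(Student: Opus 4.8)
\noindent The unifying device is the factorization
\[ \Theta^{GF}_{X}=\Theta^{G}_{F(X)}\circ\Theta^{F}_{X}, \]
which holds because a deformation $\mathcal{X}$ of $X$ over $A$ is carried by the composite to $(GF)(\mathcal{X})=G(F(\mathcal{X}))$; hence the induced morphisms on the functors $D_{(-)}$ compose, and each part reduces to a stability property of smooth morphisms in the sense of Schlessinger. For (a) I would check the two clauses of item (ii) of Definition 1.3 separately: the deformation clause is immediate by applying $F_{1}$ and then $F_{2}$, while the tangent clause follows since the map induced by $F_{2}F_{1}$ on $D_{M}(k[\varepsilon])$ is the composite of the two tangent morphisms coming from $F_{1}$ and $F_{2}$, and a composite of morphisms in the tangent category is again one (Remark \ref{rem1}(i)). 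For (b) the deformation clause is again immediate, so the content is that $\Theta^{GF}_{X}=\Theta^{G}_{F(X)}\circ\Theta^{F}_{X}$ is Schlessinger-smooth; I would prove once and for all that a composite of Schlessinger-smooth morphisms is smooth, by a three-line diagram chase: given a surjection $\alpha\colon B\to A$ in $\mathbf{Art}$ and a compatible pair in the target fibre product, lift first through the smoothness of $\Theta^{G}_{F(X)}$ and then through that of $\Theta^{F}_{X}$.

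For (c) I would use the same factorization to cancel on the left. The preliminary point is that a Schlessinger-smooth morphism of deformation functors is surjective on every Artinian level: since $D_{X}(k)$ and $D_{F(X)}(k)$ are singletons, smoothness of $\Theta^{F}_{X}$ applied to the surjection $B\to k$ forces $D_{X}(B)\to D_{F(X)}(B)$ to be onto for all $B$. Granting this, given a compatible pair for $\Theta^{G}_{F(X)}$ relative to $\alpha\colon B\to A$, I pull the $A$-datum back along the surjective $(\Theta^{F}_{X})_{A}$, lift through the smoothness of $\Theta^{GF}_{X}$, and push the lift forward through $\Theta^{F}_{X}$; this produces the required lift for $\Theta^{G}_{F(X)}$. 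The obstacle here is not the chase but its scope: the argument only yields smoothness of $\Theta^{G}_{Z}$ for $Z$ in the essential image of $F$, so to reach the conclusion for every scheme one must either assume $F$ is essentially surjective or read ``$G$ is smooth'' as smoothness along the image of $F$.

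For (d), writing $\mathcal{A}=D_{F(X)}$, $\mathcal{B}=D_{H(X)}$ and $\mathcal{C}=D_{G(X)}$, I would first identify the target deformation functor. The deformation clause for $F\times_{G}H$ — that $F(\mathcal{X})\times_{G(\mathcal{X})}H(\mathcal{X})$ is flat over $A$ with the correct closed fibre — should follow from the flatness already supplied by the three functors together with a base-change computation in the spirit of Lemmas \ref{lem1.2} and \ref{lem1.3}, and the same computation should give
\[ D_{(F\times_{G}H)(X)}\cong\mathcal{A}\times_{\mathcal{C}}\mathcal{B}, \]
under which $\Theta^{F\times_{G}H}_{X}$ becomes $(\Theta^{F}_{X},\Theta^{H}_{X})\colon D_{X}\to\mathcal{A}\times_{\mathcal{C}}\mathcal{B}$. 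It then remains to show this map into a fibre product is smooth, which I would do by reducing a general surjection to small extensions $B\to A$ and arguing in two steps: lift the datum through the smoothness of $\Theta^{F}_{X}$ to obtain $\xi\in D_{X}(B)$ with the prescribed $\mathcal{A}$-component, then correct $\xi$ inside the fibre of $D_{X}(B)\to D_{X}(A)$ so that the $\mathcal{B}$-component also becomes the prescribed one.

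The correction step in (d) is where the real difficulty sits, and I expect it to be the main obstacle. For a small extension the fibre is acted on by $D_{X}(k[\varepsilon])\otimes_{k}\ker(B\to A)$ through the deformation-functor axioms, and repairing the $\mathcal{B}$-component without disturbing the $\mathcal{A}$-component amounts to surjectivity of the induced tangent map
\[ D_{X}(k[\varepsilon])\to D_{F(X)}(k[\varepsilon])\times_{D_{G(X)}(k[\varepsilon])}D_{H(X)}(k[\varepsilon]). \]
A direct computation with the three tangent maps shows that the individual smoothness of $\Theta^{F}_{X}$, $\Theta^{G}_{X}$ and $\Theta^{H}_{X}$ only gives the relevant containment of kernels in the wrong direction, so this surjectivity does not follow formally; the proof must genuinely exploit the compatibility of the morphisms $F\to G$ and $H\to G$. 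The cleanest sufficient input I can see is that one of these two morphisms induce an injection on tangent spaces (a transversality of the square), which forces $\ker(\Theta^{F}_{X})_{*}=\ker(\Theta^{G}_{X})_{*}$ and thereby the required surjectivity. Pinning down exactly which such hypothesis to add, and verifying it from the standing assumptions, is the crux of the argument; by comparison the composition cases (a), (b) and the cancellation (c) are formal.
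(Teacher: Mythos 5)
For parts (a)--(c) your proposal follows essentially the same route as the paper. The paper dismisses (a) as trivial and handles (b) and (c) by citing Schlessinger's stability properties: composition of smooth morphisms of functors for (b), and for (c) the cancellation ``$u$ surjective and $v\circ u$ smooth imply $v$ smooth'' (Proposition 2.5 of Schlessinger), together with the fact that a smooth morphism of deformation functors is levelwise surjective. You reprove these facts by direct diagram chases instead of citing them; your chases are correct, so the difference is only self-containedness versus citation. Your scope caveat in (c) is well taken, and in fact it applies verbatim to the paper's own proof: the argument only establishes smoothness of $\Theta^{G}_{Z}$ for $Z$ in the essential image of $F$, while Definition 3.1 demands smoothness at \emph{every} scheme. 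The paper passes over this silently, so the statement of (c) should indeed be read as smoothness of $G$ along the image of $F$, or else $F$ must be assumed essentially surjective.

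On (d), your diagnosis of the crux is exactly right, and you should know that the paper does not overcome it. The paper's proof runs: by the cancellation of (c), smoothness of $F$, $G$, $H$ yields smooth morphisms $D_{F(X)}\rightarrow D_{G(X)}$ and $D_{H(X)}\rightarrow D_{G(X)}$; by base change (Schlessinger 2.5(iv)) the projection $p: D_{H(X)}\underset{D_{G(X)}}{\times}D_{F(X)}\rightarrow D_{H(X)}$ is smooth; and then it invokes part (c) once more to conclude. But that final invocation is a cancellation on the wrong side: part (c) infers smoothness of the \emph{second} factor $v$ from smoothness of $u$ and of $v\circ u$, whereas what is needed is smoothness of the \emph{first} factor $t: D_{X}\rightarrow D_{H(X)}\underset{D_{G(X)}}{\times}D_{F(X)}$ from smoothness of $p$ and of $p\circ t$. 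This left cancellation is false in general: take $D_{X}=D_{F(X)}=D_{H(X)}=h_{k[[x]]}$ and $D_{G(X)}=h_{k}$ the point functor, so that the fibre product is $h_{k[[x,u]]}$ and $t$ is the diagonal, induced by the ring surjection $k[[x,u]]\rightarrow k[[x]]$, $u\mapsto x$; all legs and both projections are smooth, but $t$ is not, since that ring map is not formally smooth. This is precisely the tangent-space obstruction you computed, so your conclusion that (d) needs an additional transversality-type hypothesis (or a genuinely nonformal argument exploiting the geometry) is correct, and the paper's proof of (d) has a real gap at exactly this point. A second gap, which you flag and the paper ignores entirely, is the identification $D_{(F\times_{G}H)(X)}\cong D_{F(X)}\underset{D_{G(X)}}{\times}D_{H(X)}$: the paper's argument ends with a statement about the fibre product of the deformation functors and never relates it to the deformation functor of the scheme $F(X)\underset{G(X)}{\times}H(X)$, which is what Definition 3.1 actually requires.
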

\begin{proof}{
Part $(a)$ of lemma is trivial.\\
$(b)$ Let $X\in \Sch/k$ and $B\rightarrow A$ be a surjective morphism  in $\mathbf{Art}$.
By smoothness of $F_{1}$, $F_{2}$ and by remark $2.4$ of \cite{M. Sch.}, there exists a surjective map
$$\Theta_{F_{2}(X),F_{2}oF_{1}(X)}: D_{F_{2}oF_{1}(X)}(B)\underset{D_{F_{2}oF_{1}(X)}(A)}{\times}D_{X}(A)\rightarrow D_{F_{1}(X)}(B)\underset{D_{F_{1}(X)}(A)}{\times}D_{X}(A)$$
such that we have
$$\Theta_{X,F_{2}oF_{1}(X)}=\Theta_{F_{2}(X),F_{2}oF_{1}(X)}o\Theta_{X,F_{2}(X)}$$
in which  $\Theta_{X,F_{2}(X)}$
is the surjective map induced by smoothness of $F_{2}$. From this equality it follows the map $\Theta_{X,F_{2}oF_{1}(X)}$ is surjective
immediately.\\
$(c)$ For a scheme $X$ in the category $\Sch/k$ consider a surjective morphism $B\rightarrow A$ in $\mathbf{Art}$. By smoothness of
$F$, the morphism $D_{X}\rightarrow D_{F(X)}$ is a surjective morphism of functors. Now apply  proposition $(2.5)$ of $\cite{M. Sch.}$ to finish the
proof.\\
$(d)$ Let $X\in \Sch/k$ and $B\rightarrow A$ be a surjective morphism  in $\mathbf{Art}$. Consider the following commutative diagram:
\vspace{-.5cm}
\begin{center}
\unitlength .7500mm 
\linethickness{0.4pt}
\ifx\plotpoint\undefined\newsavebox{\plotpoint}\fi 
\begin{picture}(30,30)(30,90)
\put(20,115){\makebox(0,0)[cc]{$D_{X}$}}
\put(70,115){\makebox(0,0)[cc]{$D_{F(X)}$}}
\put(70,80){\makebox(0,0)[cc]{$D_{G(X)}$}}
\put(59,115.5){\vector(1,0){.07}}
\put(32.75,115.5){\line(1,0){26.25}}
\put(70.25,108){\vector(0,1){.07}}
\put(70.25,86.5){\line(0,1){21.5}}
\put(61.75,86.25){\vector(3,-2){.07}}
\multiput(24,110.25)(.0530196629,-.0337078652){712}{\line(1,0){.0530196629}}
\put(39.25,95.25){\makebox(0,0)[cc]{$$}}
\put(75.25,98.25){\makebox(0,0)[cc]{$$}}
\end{picture}
\end{center}
\vspace{+.5cm}
Since the morphisms of functors
$D_{X}\rightarrow D_{F(X)}$ and
 $D_{X}\rightarrow D_{G(X)}$  are smooth morphisms of functors,
proposition $2.5(iii)$ of $\cite{M. Sch.}$ implies that
$D_{F(X)}\rightarrow D_{G(X)}$ is a smooth morphism of functors.
Similarly $D_{H(X)}\rightarrow D_{G(X)}$ is a smooth morphism of functors. Again by $2.5(iv)$ of $\cite{M. Sch.}$, the morphism of functors:
$$D_{H(X)}\underset{D_{G(X)}}{\times}D_{F(X)}\rightarrow D_{H(X)}$$
is a smooth morphism of functors. Since
in the diagram:
\begin{center}
\unitlength .7500mm 
\linethickness{0.4pt}
\ifx\plotpoint\undefined\newsavebox{\plotpoint}\fi 
\begin{picture}(30,30)(30,90)
\put(25,115){\makebox(0,0)[cc]{$D_{X}$}}
\put(81,113){\makebox(0,0)[cc]{$D_{H(X)}\underset{D_{G(X)}}{\times}D_{F(X)}$}}
\put(81,80){\makebox(0,0)[cc]{$D_{H(X)}$}}
\put(59,115.5){\vector(1,0){.07}}
\put(32.75,115.5){\line(1,0){26.25}}
\put(80.25,108){\vector(0,1){.07}}
\put(80.25,86.5){\line(0,1){21.5}}
\put(71.75,86.25){\vector(3,-2){.07}}
\multiput(34,110.25)(.0530196629,-.0337078652){712}{\line(1,0){.0530196629}}
\put(39.25,95.25){\makebox(0,0)[cc]{$$}}
\put(75.25,98.25){\makebox(0,0)[cc]{$$}}
\end{picture}
\end{center}
\vspace{+.5cm}
the morphisms
$D_{X}\rightarrow D_{H(X)}$ and
$D_{H(X)}\underset{D_{G(X)}}{\times}D_{F(X)}$
are smooth morphisms of functors, part $(c)$ of this lemma implies that $D_{H(X)}\underset{D_{G(X)}}{\times}D_{F(X)}$
is smooth. This completes the proof.
}
\end{proof}
\begin{rem}\label{rem22} $\textbf{(i)}$ The same proof works to generalize part $(c)$ of lemma \ref{lem2.1} as follows:\\
$(\acute{c})$ Let $F:\Sch/k \rightarrow \Sch/k$ and $G:\Sch/k \rightarrow \Sch/k$ be functors with $GoF$ smooth
and $F$ surjective in the level of deformations in the sense that for any $X\in \Sch/k$ and any $A\in \Obj(\mathbf{Art})$
the morphism $D_{X}(A)\rightarrow D_{F(X)}(A)$ is surjective in $\mathbf{Art}$. Then $G$ is smooth.\\
$\textbf{(ii)}$ One may ask to find a criterion to determine smoothness of a functor. We could not get a complete answer to this question.
But
by the following fact, one may answer the question at least partially:\\
A functor $F:\Sch/k \rightarrow \Sch/k$ is not smooth at $X$ if
there exists an algebra $A\in \mathbf{Art}$ such that the map $D_{X}(A)\rightarrow D_{F(X)}(A)$
is not surjective in $\mathbf{Art}$, (See \cite{M. Sch.}).
\end{rem}
\noindent Theorem \ref{Th2} relates the second smoothness notion to the hull of deformation functors.
Recall the hull of a functor is defined in \cite{M. Sch.}. We need the following:
\begin{lem}\label{lem2.2}
Let $F: \mathbf{Art} \rightarrow \Sets$ be a functor. Then its hulls are non-canonically isomorphic if there
exist.
\end{lem}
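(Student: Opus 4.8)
The plan is to use the defining properties of a hull together with the smoothness condition recalled in the Introduction. Recall that, following \cite{M. Sch.}, a hull of $F$ is a pair $(R,\xi)$ in which $R$ is a complete Noetherian local $k$-algebra with residue field $k$, and $\xi$ is an element of $\widehat F(R):=\varprojlim_{n} F(R/\mathfrak m_{R}^{\,n})$, such that the associated morphism $p_{\xi}\colon h_{R}\to F$ is smooth, where $h_{R}(A)$ denotes the set of local $k$-algebra homomorphisms $R\to A$, and such that $p_{\xi}$ is bijective on tangent spaces, i.e. $h_{R}(k[\epsilon])\to F(k[\epsilon])$ is a bijection. Given two hulls $(R,\xi)$ and $(R',\xi')$, the aim is to produce an isomorphism $R\cong R'$ compatible with $\xi$ and $\xi'$, and to see that it depends on choices.

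First I would lift $p_{\xi'}$ through the smooth morphism $p_{\xi}$. Since $R'$ is complete, $\xi'$ is a compatible system $(\xi'_{n})\in\varprojlim_{n} F(R'/\mathfrak m_{R'}^{\,n})$, and I would construct by induction on $n$ a compatible family of local homomorphisms $\phi_{n}\colon R\to R'/\mathfrak m_{R'}^{\,n}$ with $F(\phi_{n})(\xi)=\xi'_{n}$. The step $n=1$ is trivial; for the inductive step the surjection $R'/\mathfrak m_{R'}^{\,n+1}\to R'/\mathfrak m_{R'}^{\,n}$ belongs to $\mathbf{Art}$, the pair $(\phi_{n},\xi'_{n+1})$ lies in $h_{R}(R'/\mathfrak m_{R'}^{\,n})\times_{F(R'/\mathfrak m_{R'}^{\,n})}F(R'/\mathfrak m_{R'}^{\,n+1})$, and smoothness of $p_{\xi}$ supplies the required lift $\phi_{n+1}$. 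Passing to the limit yields a local homomorphism $\phi\colon R\to R'$ with $\widehat F(\phi)(\xi)=\xi'$, hence a morphism $u:=h_{\phi}\colon h_{R'}\to h_{R}$ satisfying $p_{\xi}\circ u=p_{\xi'}$. By symmetry, smoothness of $p_{\xi'}$ produces $\psi\colon R'\to R$ and $v:=h_{\psi}\colon h_{R}\to h_{R'}$ with $p_{\xi'}\circ v=p_{\xi}$.

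Next I would analyse the two composites. From $p_{\xi}\circ u=p_{\xi'}$ and $p_{\xi'}\circ v=p_{\xi}$ and the fact that $p_{\xi}$ and $p_{\xi'}$ are bijective on tangent spaces, both $u$ and $v$ are bijective on tangent spaces; dualising, the maps $\mathfrak m_{R}/\mathfrak m_{R}^{2}\to\mathfrak m_{R'}/\mathfrak m_{R'}^{2}$ and $\mathfrak m_{R'}/\mathfrak m_{R'}^{2}\to\mathfrak m_{R}/\mathfrak m_{R}^{2}$ attached to $\phi$ and $\psi$ are isomorphisms. Hence the endomorphism $\phi\circ\psi\colon R'\to R'$ is an isomorphism on the cotangent space $\mathfrak m_{R'}/\mathfrak m_{R'}^{2}$. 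The decisive algebraic input is then that a local endomorphism of a complete Noetherian local $k$-algebra which is surjective on the cotangent space is an automorphism: completeness and the graded Nakayama lemma give surjectivity, and a surjective endomorphism of a Noetherian ring is injective. Thus $\phi\circ\psi$ and, symmetrically, $\psi\circ\phi$ are automorphisms, equivalently $v\circ u$ and $u\circ v$ are automorphisms of $h_{R'}$ and $h_{R}$. A short diagram argument then shows that $u$ and $v$ are mutually inverse isomorphisms, so $R\cong R'$ as augmented $k$-algebras, compatibly with the hull data.

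The main obstacle I anticipate is not the inductive lifting but the algebraic lemma in the precise form needed. In particular one must know that the hull rings are Noetherian, which relies on the finiteness of the tangent space $F(k[\epsilon])$ built into the definition of a hull, in order to legitimise the passage from ``surjective'' to ``bijective''; without this the composite endomorphisms need not be injective. Finally, the isomorphism is only well defined up to the choices made in the inductive lifting step, and this is exactly the source of the non-canonicity asserted in the statement.
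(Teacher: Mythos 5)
Your proof is correct and is essentially the paper's own proof: the paper disposes of this lemma by citing Proposition $2.9$ of \cite{M. Sch.}, and what you have written is precisely the standard argument for that proposition --- inductive lifting of $\xi'$ through the surjections $R'/\mathfrak{m}_{R'}^{n+1}\rightarrow R'/\mathfrak{m}_{R'}^{n}$ using smoothness of $p_{\xi}$, followed by the cotangent-space/Nakayama/Noetherian argument to upgrade the two comparison maps to isomorphisms, with the Noetherian hypothesis on hulls correctly identified as the point that makes ``surjective implies bijective'' work. One small slip: knowing that $\phi\circ\psi$ and $\psi\circ\phi$ are automorphisms shows that $\phi$ and $\psi$ are each isomorphisms, not that $u$ and $v$ are mutually inverse (they may differ by an automorphism); this is harmless, since the lemma only asserts existence of a (non-canonical) isomorphism of hulls, and that indeterminacy is exactly the non-canonicity.
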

\begin{proof}{
See Proposition $2.9$ of \cite{M. Sch.}.
}
\end{proof}
\begin{thm}\label{Th2}
Let $F:\Sch/k\rightarrow \Sch/k$ be a
functor and for a scheme $X$ the functor $F$ has the following
properties:\\
$(a)$ $F(\mathcal{X})$ is a deformation of $F(X)$ if $\mathcal{X}$ is a deformation of $X$.\\
$(b)$ The functor $F$ induces isomorphism on tangent spaces.\\
Then
 $F$ is smooth at $X$ if and only if $(R,F(\xi))$ is a hull of $D_{F(X)}$ whenever $(R,\xi)$ is a hull of $D_{X}$.
\end{thm}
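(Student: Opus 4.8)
The plan is to reduce the equivalence to the standard composition and cancellation properties of smooth morphisms of functors on $\mathbf{Art}$, using property $(b)$ to make the tangent-space half of the hull condition automatic. First I would fix a hull $(R,\xi)$ of $D_X$; by Lemma \ref{lem2.2} hulls are unique up to non-canonical isomorphism and smoothness is stable under such isomorphisms, so it is enough to run the argument for one fixed but arbitrary pair $(R,\xi)$. Recall that $\xi\in\hat{D}_X(R)$ induces a morphism $\phi=\phi_\xi: h_R\to D_X$ with $h_R=\mathrm{Hom}(R,-)$ which, by definition of a hull, is smooth and bijective on tangent spaces. Since $\Theta_X$ is natural and $F(\mathcal{X})$ deforms $F(X)$ by property $(a)$, the system $(\Theta_X(\xi_n))_n$ lies in $\varprojlim_n D_{F(X)}(R/\mathfrak{m}^n)$, so $F(\xi):=\Theta_X(\xi)$ is a genuine element of $\hat{D}_{F(X)}(R)$, and the morphism $\psi: h_R\to D_{F(X)}$ it induces is precisely the composite $\psi=\Theta_X\circ\phi$.

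Next I would dispose of the tangent-space requirement once and for all. On tangent spaces one has $d\psi=d\Theta_X\circ d\phi$; here $d\phi$ is an isomorphism because $(R,\xi)$ is a hull, and $d\Theta_X$ is an isomorphism by property $(b)$. Hence $d\psi$ is an isomorphism \emph{regardless} of any smoothness hypothesis. Therefore $(R,F(\xi))$ is a hull of $D_{F(X)}$ if and only if $\psi$ is a smooth morphism of functors, and the theorem collapses to the single equivalence: $\Theta_X$ is smooth $\iff\psi=\Theta_X\circ\phi$ is smooth.

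For the forward implication I would assume $F$ smooth at $X$, i.e. $\Theta_X$ smooth. Since $\phi$ is smooth (being a hull morphism) and a composite of smooth morphisms is smooth (Proposition $2.5$ of \cite{M. Sch.}; compare Lemma \ref{lem2.1}), $\psi$ is smooth, so $(R,F(\xi))$ is a hull. For the converse I would assume $(R,F(\xi))$ is a hull, so $\psi$ is smooth. I also know $\phi$ is smooth, and a smooth morphism is surjective on $B$-points for every $B\in\Obj(\mathbf{Art})$: applying the defining lifting property to the residue surjection $B\to k$ and noting $h_R(k)$, $D_X(k)$ are singletons gives $h_R(B)\twoheadrightarrow D_X(B)$. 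Thus $\phi$ is surjective at the level of deformations, and the cancellation property — smoothness of $\Theta_X\circ\phi$ together with surjectivity of $\phi$ forcing smoothness of $\Theta_X$ — yields that $\Theta_X$ is smooth, i.e. $F$ is smooth at $X$. This is exactly the mechanism recorded in Remark \ref{rem22}$(\acute{c})$.

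The genuinely delicate points, where I expect the real work to sit, are two. First, one must confirm that $F(\xi)$ is a bona fide formal deformation over $R$, i.e. that the compatible system $(\Theta_X(\xi_n))_n$ is well defined in the inverse limit; this is where naturality of $\Theta_X$ and property $(a)$ are genuinely used. Second, one must pin down the precise form of the cancellation lemma, since cancellation fails for composites in general: the argument succeeds only because $\phi$ is surjective on every Artin point, which is what lets one lift an element of $G(A)$ back into the source before applying smoothness of the composite. Everything else is a formal lifting-property chase in $\mathbf{Art}$.
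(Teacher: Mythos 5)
Your proposal is correct and takes essentially the same route as the paper: the paper's (very terse) proof likewise rests on the factorization $h_{R,F(X)}=\Theta_X\circ h_{R,X}$, using composition of smooth morphisms for the forward direction and the surjectivity-based cancellation for the converse (parts $(b)$, $(c)$ of Lemma \ref{lem2.1}, ultimately Schlessinger's Proposition 2.5), together with uniqueness of hulls (Lemma \ref{lem2.2}). Your write-up merely makes explicit the details the paper leaves implicit — the well-definedness of $F(\xi)$ as a formal element and the tangent-space bookkeeping via hypothesis $(b)$ — so it is a faithful, fleshed-out version of the same argument.
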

\begin{proof}{
To prove the Theorem it is enough to apply
$(b), (c)$ of lemma \ref{lem2.1}, and lemma \ref{lem2.2}
to the functors
$$\Theta_{X}:D_{X}\rightarrow D_{F(X)} \quad,\quad h_{R,X}:h_{R}\rightarrow D_{X} \quad,\quad h_{R,F(X)}:h_{R}\rightarrow  D_{F(X)}.$$
}
\end{proof}
\vspace{-.3cm}
For a scheme $X$ let:
\begin{center}
 \{pairs $(\mathcal{X},\Omega_{\mathcal{X}/k})$ which $\mathcal{X}$ is an infinitesimal deformation
of $X$ over $A$ \}
\end{center}
be the isomorphism classes of fibered deformations of $X$.\\
In the following example we use this notion of deformations of schemes.
\begin{example}\label{exam2}
The functor defined by:
$$\begin{array}{ccc}
F:\Sch/k &\rightarrow& \QCoh \\
F(X)\!\!\!\!\!\!\!\!\!\!&=&\Omega_{X/k}
\end{array}$$
is a smooth functor.
\end{example}
Note that if one considers deformations of $\Omega_{X/k}$ as usual case, the
above functor will not be smooth. The usual deformation of $ \Omega_{X/k}$ can
be described as simultaneous deformation of an object,
and differential forms on that object.
Also this observation is valid for
$TX$ and $\omega_{X}$ instead of $\Omega_{X}$.
\begin{rem}\label{rem2}
The first and second smoothness notions are in general different.
Note that a functor which is smooth with the second
notion induces surjective maps on tangent spaces.
Since the morphism induced on tangent spaces with first notion
of smoothness is not necessarily surjective,
a functor which is smooth in the sense of first notion
is not necessarily smooth with the sense of second notion. Also a functor which is smooth in the sense of second
notion can not be necessarily smooth with the first notion in general. In fact the map induced on tangent spaces
by second notion is not necessarily a linear map. It is easy to see that the example \ref{exam2} is smooth with both of the notions, but
examples \ref{exam0} and \ref{exam1} are smooth just in the sense of first one.
\end{rem}
\subsection{A Geometric interpretation}
Let $F$ be a smooth functor at $X$. By theorem \ref{Th2}, $X$ and
$F(X)$ have the same universal rings and this can be interpreted
as we are deforming $X$ and $F(X)$ simultaneously. Therefore we
have an algebraic language for simultaneous deformations. The
example \ref{exam2} can be interpreted as follows: we are deforming a
geometric space and an ingredient of that space, e.g. the
structure sheaf of the space or its sheaf of relative differential forms, and these
operations are smooth.

\subsection{Relation with smoothness of a morphism}

Let $\mathcal{M}$ be a moduli family of algebro - geometric objects with
a variety $M$ as its fine moduli space and suppose
$Y(m)\rightarrow M $ is the fiber on $m\in M$. With this
assumptions we would have the following bijections:
$$
\begin{array}{ccc}
T_{m,M}&\cong&\!\!\!\!\!\!\!\!\!\!\!\!\!\!\!\!\!\!\!\!\!\!\!\!\!\!\!\!\!\!\!\!\!\!\!\!\!\!
\!\!\!\!\!\!\!\!\!\!\!\!\!\!\!\!\!\!\!\!\!\!\!\!\!\!\!\!\!\!
\mbox{Hom}(\Spec(k[\epsilon]),M)\\
&\cong&\{\mbox{classes of first order deformations of X over A} \}
\end{array}
$$
In fact these bijections states that why deformations are
important in geometric usages. Now
suppose we have two moduli families $\mathcal{M}_{1}$ and $\mathcal{M}_{2}$
with
 varieties $M_{1}$ and $M_{2}$ as their fine moduli spaces. Also
 describe $\mathcal{M}_{1}$ and $\mathcal{M}_{2}$ as categories in which there
exists
 a smooth functor  $F$ between them.
 In this setting, if we have a morphism between
 them, induced from  $F$, then it is a smooth morphism.

\section{Third Smoothness Notion}

This notion of smoothness is completely motivated from Rosenberg's
reconstruction theorem, Theorem $(A.2)$ of \cite{A. L. R}. For this notion of smoothness we do not use deformation theory.

\noindent\textbf{3.1 Definition:} Let $F:C_{1} \rightarrow C_{2}$ be a functor between
abelian categories such that there exists a morphism
$$f:\Spec(C_{1})\rightarrow \Spec(C_{2})$$
induced by the functor $F$. We say $F$ is a smooth functor if $f$ is a smooth morphism of
schemes.
\begin{rem}\label{rem3}
$(a)$ Since this smoothness notion uses a language completely different from the two previous ones,
it does not imply non of them and vice versa. We did not verified this claim with details but it is not so
legitimate to expect that this smoothness implies the previous ones, because deformation theory is not consistent with the
Rosenberg construction. This observation together with the remark \ref{rem2} show that these three notions are
independent of each other, having nice geometric and algebraic meaning in their own rights separately.\\
$(b)$ It seems that a functor of abelian categories induces a morphism of schemes
in rarely cases. But the cases in which this happens are the
cases of enough importance to consider them. Here we mention some cases which
this happens.\\
\textbf{(i)} Let $f:X \rightarrow \Spec(k)$ be a morphism of finite type
between schemes. Then it can be shown $f$ is induced by
$$f_{*}:\QCoh(X) \rightarrow
\QCoh(\Spec(k))$$
by Rosenberg's construction. This example is important because it can be a source of motivation, to translate
notions from commutative case to noncommutative one.\\
\textbf{(ii)} Also the following result of Rosenberg is worth to note:\\
\begin{pro}
Let $A$ be an abelian category.\\
(a) For any topologizing subcategory $T$ of $A$, the
inclusion functor $T\rightarrow A$ induces an embedding $\Spec(T) \rightarrow
\Spec(A)$.\\
(b) For any exact localization $Q:A \rightarrow A/S $ and for any $P \in
\Spec(A)$, either $P \in \Obj(S)$ or $Q(P)\in \Spec(A/S)$; hence $Q$ induces an
injective map from $\Spec(A)-\Spec(S)$ to $\Spec(A/S)$.
\end{pro}
\begin{proof}{
See Proposition $(A.0.3)$ of \cite{A. L. R}.
}
\end{proof}
\end{rem}
\textbf{Acknowledgements:} The authors are grateful for referee/s carefully reading of the paper,
notable remarks and  valuable suggestions about
it.

\end{document}